\newcommand{\bx}{{\mathbf{X}}}
\newcommand{\bt}{{\mathbf{t}}}
\newcommand{\pr}{{\mathbb{P}}}
\newcommand{\ex}{{\mathbb{E}}}
\newcommand{\bz}{{\mathbf{Z}}}
\newcommand{\tk}{{\mathbf{\theta}}}
\theoremstyle{definition}
\newtheorem{definition}{Definition}[section]
\newtheorem{thm}{Theorem}[section]
\newtheorem{lemma}{Lemma}[section]
\newtheorem{remark}{Remark}
\title{A Proof of the Herschel-Maxwell Theorem Using the Strong Law of Large Numbers}
\author{Somabha Mukherjee
	\thanks{Electronic address: \texttt{somabha@wharton.upenn.edu, somabhamukherjee@gmail.com}}}
\affil{Department of Statistics, Wharton School, University of Pennsylvania}
\date{\today}
\begin{document}
	
	\maketitle
	\thispagestyle{empty}
\begin{tikzpicture}
\draw (0,0) -- (16,0);
\end{tikzpicture}	
	\begin{abstract}
		  In this article, we use the strong law of large numbers to give a proof of the Herschel-Maxwell theorem, which characterizes the normal distribution as the distribution of the components of a spherically symmetric random vector, provided they are independent. We present shorter proofs under additional moment assumptions, and include a remark, which leads to another strikingly short proof of Maxwell's characterization using the central limit theorem.\\\\
		  \textbf{KEY WORDS:}~ Spherically symmetric; Normal distribution; Characteristic function; Strong law of large numbers; Central limit theorem.    
		 %The objective of this paper is to give a proof of the same characterization theorem based on an application of the Strong Law of Large Numbers. The fundamental idea behind the proof is a well known technique of constructing a uniform distribution on the surface of a sphere in the $n$ dimensional Euclidean space from general spherically symmetric random vectors.     
	\end{abstract}
	
\begin{tikzpicture}
\draw (0,0) -- (16,0);
\end{tikzpicture}

\tableofcontents

	\section{Introduction}
	The Herschel-Maxwell theorem is one of the many beautiful characterizations of the normal disribution. It states that if the distribution of a random vector with independent components is invariant under rotations, then the components must be identically distributed as a normal distribution. 
	
	As mentioned in \cite{charakt}, J.C. Maxwell addressed the following question: \textit{What is the distribution of velocities of the gas particles?} The argument behind Maxwell's claim that velocities are normally distributed, hinged upon two very natural assumptions about the distribution function, independence and rotation invariance. Even before Maxwell, astronomer J.F.W. Herschel addressed a similar issue while characterizing the errors in astronomical measurements. He assumed that the components of the two-dimensional errors in measurement are independent, and that the distribution of the error is independent of its direction.
	
	In this paper, we give a proof of the Herschel-Maxwell theorem using the strong law of large numbers, and give a remark about another unbelievably short proof of the theorem using the central limit theorem. The main tools of our analysis are characteristic functions and Haar's Theorem for rotation-invariant measures on the surface of the unit sphere in Euclidean spaces.       
	
	\section{Some Basic Properties of a Spherically Symmetric Distribution}\label{basic}
	\theoremstyle{definition}
	\begin{definition}\label{spherical}
		A random vector $\mathbf{X}$ taking values in $\mathbb{R}^n$ is said to have a spherically symmetric distribution, if $\mathbf{X}$ and $\mathbf{H} \mathbf{X}$ have the same distribution for every $n \times n$ real, orthogonal matrix $\mathbf{H}$.
		\end{definition}
In the following two theorems, we state some basic properties of a spherically symmetric distribution. 
\begin{thm}\label{one}
The entries of a spherically symmetric random vector have the same distribution. Moreover, if that distribution has a finite mean, then the mean must be $0$, and if that distribution has finite second moment, then any two distinct entries of the random vector are uncorrelated. 
\end{thm}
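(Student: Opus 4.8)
The plan is to exploit Definition~\ref{spherical} directly: I substitute a handful of carefully chosen orthogonal matrices $\mathbf{H}$ into the distributional identity ``$\mathbf{X}$ and $\mathbf{H}\mathbf{X}$ have the same law'' and read off the three assertions. Throughout I use the elementary facts that equality in distribution of two random vectors is preserved under applying a fixed measurable map to both (in particular, coordinate projections and linear combinations), and hence that the corresponding moments agree whenever they exist.

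\emph{Equidistribution of the entries and vanishing of the mean.} Fix $i \neq j$ and let $\mathbf{H}$ be the permutation matrix transposing the standard basis vectors $\mathbf{e}_i$ and $\mathbf{e}_j$ and fixing the rest; it is orthogonal. The $i$-th coordinate of $\mathbf{H}\mathbf{X}$ is $X_j$, while that of $\mathbf{X}$ is $X_i$, so $X_i$ and $X_j$ have the same law; as $i,j$ were arbitrary, all entries share a common distribution, for which I write $\mu := \ex[X_1]$ when the mean exists and $\sigma^2 := \vc(X_1)$ when the second moment exists. For the mean, take instead $\mathbf{H} = -\mathbf{I}_n$, which is orthogonal; then $-\mathbf{X}$ and $\mathbf{X}$ have the same distribution, so $-X_1$ and $X_1$ do, and taking (finite) expectations gives $-\mu = \mu$, i.e.\ $\mu = 0$.

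\emph{Uncorrelatedness.} By the previous step every $X_i$ has mean $0$ and variance $\sigma^2$. Let $\mathbf{H}_\theta$ be the orthogonal matrix acting as a planar rotation by angle $\theta$ in the coordinate plane spanned by $\mathbf{e}_i$ and $\mathbf{e}_j$ and as the identity on the orthogonal complement. Then the $i$-th coordinate of $\mathbf{H}_\theta\mathbf{X}$, namely $X_i\cos\theta - X_j\sin\theta$, has the same law, hence the same variance, as $X_i$; expanding $\vc(X_i\cos\theta - X_j\sin\theta) = \sigma^2$ and using $\ex[X_i] = \ex[X_j] = 0$ gives $\sigma^2 - 2\sin\theta\cos\theta\,\ex[X_iX_j] = \sigma^2$, so $\sin\theta\cos\theta\,\ex[X_iX_j] = 0$, and choosing $\theta = \pi/4$ forces $\ex[X_iX_j] = 0$. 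Equivalently, one may simply note that the covariance matrix $\Sigma$ of $\mathbf{X}$ satisfies $\mathbf{H}\Sigma\mathbf{H}^\top = \Sigma$ for every orthogonal $\mathbf{H}$, so $\Sigma$ commutes with all orthogonal matrices and is therefore a scalar multiple of $\mathbf{I}_n$, whence its off-diagonal entries, the covariances, vanish.

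I do not anticipate a genuine obstacle here: each part reduces to selecting the right orthogonal matrix and performing a one-line computation. The only points deserving a little care are the justification that distributional equality of the vectors transfers to the relevant functions of them (so that the moment identities above are legitimate) and the routine check that the $2\times2$ rotation block really does embed as an $n\times n$ orthogonal matrix.
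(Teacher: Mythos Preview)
Your argument is correct in every part: the transposition matrix gives equidistribution of entries, $-\mathbf{I}_n$ gives symmetry about the origin and hence mean zero, and the planar rotation (or, equivalently, the observation that $\Sigma$ commutes with every orthogonal matrix) forces the off-diagonal covariances to vanish. The only minor point worth making explicit is that the finite-second-moment hypothesis in the last part already implies finite first moment (e.g.\ by $\ex|X_1|\le (\ex X_1^2)^{1/2}$), so you are entitled to invoke $\ex X_i=0$ there.

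As for comparison with the paper: there is nothing to compare. The paper \emph{states} Theorem~\ref{one} as a basic property of spherically symmetric distributions but does not supply a proof; your write-up would serve as a perfectly good one.
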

%\begin{proof}
%Suppose that $\bx = (X_1,...,X_n)^{\textrm{T}}$ has a spherically symmetric distribution. Fix some $1 < i \leq n$ and denote by $\mathbf{P}$ the matrix obtained by swapping the $1^{\textrm{st}}$ and the $i^{\textrm{th}}$ rows of $n \times n$ the identity matrix. $P$ is an orthogonal matrix, and hence $\bx$ and $\mathbf{P} \bx$ have the same distribution. Comparing the first entries of %these two vectors, we see that $X_1$ and $X_i$ have the same distribution. For the other part, assume that this common distribution has a finite mean $\mu$. Since $\bx$ and $-\bx$ have the same distribution, taking expectation gives $\mu = -\mu$ and we are done.
%\end{proof}
\begin{thm}\label{two}
The random vector $\bx = (X_1,...,X_n)^{\textrm{T}}$ has a spherically symmetric distribution if and only if its characteristic function $\phi$ satisfies $\phi (\bt) = \ex e^{i||\bt||X_1}$ for all $\bt \in \mathbb{R}^n$.  
\end{thm}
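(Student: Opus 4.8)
The plan is to use the elementary interplay between an orthogonal transformation applied to $\bx$ and one applied to the argument $\bt$ of the characteristic function, together with the uniqueness theorem for characteristic functions. Write $\phi(\bt) = \ex e^{i\bt^{\textrm{T}}\bx}$. For any $n\times n$ real orthogonal matrix $\mathbf{H}$, the characteristic function of $\mathbf{H}\bx$ evaluated at $\bt$ is $\ex e^{i\bt^{\textrm{T}}\mathbf{H}\bx} = \ex e^{i(\mathbf{H}^{\textrm{T}}\bt)^{\textrm{T}}\bx} = \phi(\mathbf{H}^{\textrm{T}}\bt)$, and this identity will drive both directions of the proof.

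For the forward implication, suppose $\bx$ is spherically symmetric, so that $\mathbf{H}\bx \myeq \bx$ for every orthogonal $\mathbf{H}$. Equating characteristic functions gives $\phi(\mathbf{H}^{\textrm{T}}\bt) = \phi(\bt)$ for all orthogonal $\mathbf{H}$ and all $\bt$; since $\mathbf{H}\mapsto\mathbf{H}^{\textrm{T}}$ is a bijection of the orthogonal group onto itself, this is the same as saying $\phi(\mathbf{H}\bt) = \phi(\bt)$ for every orthogonal $\mathbf{H}$. Hence $\phi$ is constant on each sphere $\{\bt : \|\bt\| = r\}$. Given $\bt \neq 0$, I would pick an orthogonal $\mathbf{H}$ with $\mathbf{H}\bt = (\|\bt\|, 0, \dots, 0)^{\textrm{T}}$ — such an $\mathbf{H}$ exists because the unit vector $\bt/\|\bt\|$ can be completed to an orthonormal basis of $\mathbb{R}^n$, and one takes the transpose of the matrix with those columns — so that $\phi(\bt) = \phi\big((\|\bt\|,0,\dots,0)^{\textrm{T}}\big) = \ex e^{i\|\bt\| X_1}$. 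The case $\bt = 0$ is immediate since both sides equal $1$.

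For the converse, assume $\phi(\bt) = \ex e^{i\|\bt\| X_1}$ for all $\bt$. The right-hand side depends on $\bt$ only through $\|\bt\|$, and orthogonal matrices preserve the Euclidean norm; therefore, for any orthogonal $\mathbf{H}$, the characteristic function of $\mathbf{H}\bx$ at $\bt$ is $\phi(\mathbf{H}^{\textrm{T}}\bt) = \ex e^{i\|\mathbf{H}^{\textrm{T}}\bt\| X_1} = \ex e^{i\|\bt\| X_1} = \phi(\bt)$. Thus $\mathbf{H}\bx$ and $\bx$ have the same characteristic function, and by the uniqueness theorem for characteristic functions they have the same distribution, so $\bx$ is spherically symmetric.

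The only step that is not pure bookkeeping is the existence of an orthogonal matrix sending a prescribed nonzero $\bt$ to $(\|\bt\|,0,\dots,0)^{\textrm{T}}$, which is the standard linear-algebra fact that any unit vector extends to an orthonormal basis; beyond that, the main points requiring care are getting the transpose $\mathbf{H}^{\textrm{T}}$ (rather than $\mathbf{H}$) into the argument correctly and treating $\bt = 0$ separately. I do not anticipate any substantive obstacle.
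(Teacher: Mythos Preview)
Your argument is correct and is the standard one: rotational invariance of the distribution is equivalent, via the uniqueness theorem, to rotational invariance of the characteristic function, and the latter reduces the computation of $\phi(\bt)$ to that of $\phi$ at the point $(\|\bt\|,0,\dots,0)^{\textrm{T}}$, which is $\ex e^{i\|\bt\|X_1}$. The paper itself states Theorem~\ref{two} without proof, listing it among the ``basic properties'' of spherically symmetric distributions, so there is no proof in the paper to compare against; your write-up would serve perfectly well as the omitted justification.
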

It follows immediately from Theorem \ref{two}, that if a random vector $(X_1,...,X_n)^{\textrm{T}}$ follows a spherically symmetric distribution, then so does all its subvectors. We now state and prove some sort of a ``converse" of this fact, under an additional assumption,  which will play a crucial role in our main proof. 
\begin{thm}\label{three}
Let $F$ be a distribution on $\mathbb{R}$ with the property that if $X_1$ and $X_2$ are independent observations from $F$, then $(X_1,X_2)^{\textrm{T}}$ has a spherically symmetric distribution. Then for every $n$, if $X_1,...,X_n$ are independent observations from $F$, $(X_1,...,X_n)^{\textrm{T}}$ has a spherically symmetric distribution.
\end{thm}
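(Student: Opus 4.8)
The plan is to push everything through characteristic functions by means of Theorem \ref{two}. Write $\psi(u) = \ex e^{iuX_1}$ for the characteristic function of $F$, $u \in \mathbb{R}$. If $X_1, X_2$ are independent observations from $F$, then by independence the joint characteristic function of $(X_1,X_2)^{\textrm{T}}$ is $(t_1,t_2) \mapsto \psi(t_1)\psi(t_2)$, while Theorem \ref{two} says $(X_1,X_2)^{\textrm{T}}$ is spherically symmetric \emph{if and only if} this equals $\ex e^{i\sqrt{t_1^2+t_2^2}\,X_1} = \psi(\sqrt{t_1^2+t_2^2})$. So the hypothesis is exactly the functional equation
\begin{equation}\label{fe}
\psi(s)\,\psi(t) = \psi\!\left(\sqrt{s^2+t^2}\right) \qquad \text{for all } s,t \in \mathbb{R}.
\end{equation}
Putting $t = 0$ and using $\psi(0)=1$ shows in passing that $\psi$ is even, i.e. $\psi(s) = \psi(|s|)$ (this also follows from Theorem \ref{one}).

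Next I would prove, by induction on $n$, that for independent observations $X_1,\dots,X_n$ from $F$,
\begin{equation}\label{prodeq}
\prod_{j=1}^n \psi(t_j) = \psi\!\left(\sqrt{\textstyle\sum_{j=1}^n t_j^2}\right) \qquad \text{for all } (t_1,\dots,t_n) \in \mathbb{R}^n.
\end{equation}
The case $n=1$ is just evenness of $\psi$, and $n=2$ is \eqref{fe}. For the inductive step, assume \eqref{prodeq} with $n-1$ in place of $n$, and set $a := \sqrt{\sum_{j=1}^{n-1} t_j^2} \ge 0$. Then
\begin{equation*}
\prod_{j=1}^n \psi(t_j) = \Bigl(\prod_{j=1}^{n-1}\psi(t_j)\Bigr)\psi(t_n) = \psi(a)\,\psi(t_n) = \psi\!\left(\sqrt{a^2+t_n^2}\right) = \psi\!\left(\sqrt{\textstyle\sum_{j=1}^n t_j^2}\right),
\end{equation*}
where the third equality is \eqref{fe} applied with $s = a$. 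Once \eqref{prodeq} holds for a given $n$, its left-hand side is (again by independence) the characteristic function of $(X_1,\dots,X_n)^{\textrm{T}}$ and its right-hand side is $\ex e^{i\|\bt\|X_1}$, so Theorem \ref{two}, now invoked in $\mathbb{R}^n$, yields that $(X_1,\dots,X_n)^{\textrm{T}}$ is spherically symmetric.

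There is essentially no hard step here; the only points needing a little care are (i) that Theorem \ref{two} is a genuine ``if and only if'', so that the hypothesis really hands us \eqref{fe} for \emph{all} real $s,t$ — in particular allowing the substitution of the nonnegative real $a$ in the inductive step — and (ii) recognizing that \eqref{prodeq} for each fixed $n$ is precisely the characteristic-function form of spherical symmetry in $\mathbb{R}^n$. Note that the deeper content of the Herschel--Maxwell phenomenon — that \eqref{fe} forces $\psi$ to be Gaussian — is not used here at all; only the multiplicative--additive bookkeeping in \eqref{fe} is needed.
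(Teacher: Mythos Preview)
Your proof is correct and is essentially the same argument as the paper's: both reduce the claim via Theorem~\ref{two} to the characteristic-function identity $\prod_{j=1}^n \psi(t_j) = \psi\bigl(\sqrt{\sum_j t_j^2}\bigr)$ and establish it by induction, peeling off one factor and applying the $n=2$ case. Your write-up is a bit more explicit (isolating the functional equation~\eqref{fe} and the evenness of $\psi$), but the structure and the key step are identical.
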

\begin{proof}
In view of Theorem \ref{two}, it suffices to show that $\ex e^{i \sum_{j=1}^n t_j X_j} = \ex e^{i \left(\sqrt{\sum_{j=1}^n t_j^2}\right) X_1}$ for all $n$ and for all $t_1,...,t_n$. By Theorem \ref{two}, this is true for $n = 1$ and $2$. Assume that the proposition holds for some $n$. Let $X_1,...,X_n,X_{n+1}$ be $(n+1)$ independent observations from $F$. Then, by our induction hypothesis, we have: $$\ex e^{i \sum_{j=1}^{n+1} t_j X_j} = \left(\ex e^{i \sum_{j=1}^n t_j X_j}\right)\left(\ex e^{i t_{n+1} X_{n+1}}\right) = \left(\ex e^{i \left(\sqrt{\sum_{j=1}^n t_j^2}\right) X_1}\right)\left(\ex e^{i t_{n+1} X_{n+1}}\right) = \ex e^{i \left(\sqrt{\sum_{j=1}^{n+1} t_j^2}\right) X_1}$$ for all $t_1,...,t_{n+1}$. We are done.
\end{proof}
\begin{thm}\label{four}
Let $X$ and $Y$ be two independent random variables. Suppose that $(X,Y)^{\textrm{T}}$ has a spherically symmetric distribution. Then, $\pr(X=0)$ is either $0$ or $1$.
\end{thm}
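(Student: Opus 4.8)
The plan is to argue by contradiction. Write $p := \pr(X = 0)$ and suppose $0 < p < 1$. By Theorem~\ref{one}, $Y$ has the same distribution as $X$, so $\pr(Y = 0) = p$; since $X$ and $Y$ are independent, $\pr(X = 0,\ Y \neq 0) = p(1-p) > 0$. Reading this geometrically, the random point $(X,Y)$ lands on the punctured $y$-axis $\ell := \{(0,y) : y \neq 0\}$ with probability $p(1-p)$.

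Next I would exploit spherical symmetry to rotate $\ell$. For $\theta \in \mathbb{R}$ let $\mathbf{H}_\theta$ be the rotation matrix through angle $\theta$. Since $(X,Y)^{\mathrm{T}}$ and $\mathbf{H}_\theta(X,Y)^{\mathrm{T}}$ have the same law, for every Borel set $A$ we have $\pr((X,Y) \in A) = \pr\big((X,Y) \in \mathbf{H}_\theta^{-1}A\big)$; taking $A = \mathbf{H}_\theta \ell$ yields $\pr\big((X,Y) \in \mathbf{H}_\theta \ell\big) = \pr((X,Y) \in \ell) = p(1-p)$ for every $\theta$. But $\{\mathbf{H}_\theta \ell : \theta \in [0,\pi)\}$ is an uncountable family of distinct punctured lines through the origin, and any two of them are disjoint because they meet only at the origin, which has been removed. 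Hence $(X,Y)$ assigns probability $p(1-p)$ to each member of an uncountable family of pairwise disjoint events.

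To conclude, I would invoke the standard fact that a probability space admits no uncountable family of pairwise disjoint events of positive probability: for each integer $m \geq 1$, at most $m$ members of a disjoint family can have probability exceeding $1/m$, so the members of positive probability form a countable set. Since $p(1-p) > 0$, this is the desired contradiction, and therefore $\pr(X = 0) \in \{0,1\}$. I do not expect a real obstacle here: the only slightly delicate points are the final disjointness lemma and the observation — the genuine crux of the argument — that independence converts the atom mass $p$ at the origin into the mass $p(1-p)$ on a whole punctured line, which rotations then replicate uncountably often. (One could instead note that independence together with Theorem~\ref{two} forces the common characteristic function $\phi$ of $X$ and $Y$ to satisfy $\phi(s)\phi(t) = \phi\big(\sqrt{s^2+t^2}\big)$; this makes $\phi$ real, nonnegative and continuous, and the substitution $u \mapsto \phi(\sqrt{u})$ reduces it to Cauchy's exponential equation, so $\phi(t) = e^{-ct^2}$ for some $c \geq 0$, whence $X$ is either identically $0$ or normal — but I would keep the elementary argument above as the main proof, since this stronger conclusion is not needed for the statement at hand.)
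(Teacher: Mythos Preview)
Your proof is correct and is essentially the same argument as the paper's: both assume $0<p<1$, use independence together with Theorem~\ref{one} to produce the mass $p(1-p)$ on a punctured line through the origin, and then invoke rotation invariance to obtain an uncountable disjoint family of events each of probability $p(1-p)$, a contradiction. The only cosmetic difference is that the paper computes $\pr(X\cos\theta + Y\sin\theta = 0,\ (X,Y)\neq(0,0)) = p - p^2$ directly from $X\cos\theta + Y\sin\theta \stackrel{\mathrm{d}}{=} X$, whereas you first compute $\pr((X,Y)\in\ell)=p(1-p)$ on the punctured $y$-axis and then rotate; these are the same calculation viewed from two angles.
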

\begin{proof}
Suppose, towards a contradiction, that $0<\pr(X=0)<1$. By Theorem \ref{one}, $X$ and $Y$ have the same distribution. Since $X \stackrel{\textrm{d}}{=} X \cos \tk + Y \sin \tk$ for every $\tk \in \mathbb{R}$, we have for all $\tk \in \mathbb{R}$ : 
  \begin{eqnarray*}
 &&\pr\left(X \cos \tk + Y \sin \tk = 0 , (X,Y) \neq (0,0)\right)\\ &=& \pr(X \cos \tk + Y \sin \tk = 0) - \pr(X=0,Y=0)\\ &=& \pr(X=0) - \pr(X=0)\pr(Y=0)\\ &=& \pr(X=0) \left(1-\pr(X=0)\right) > 0.
  \end{eqnarray*} 
However, the sets $\big\{(x,y) \neq (0,0)  : x \cos \tk + y \sin \tk = 0\big\}~\left(0 \leq \tk \leq \frac{\pi}{2}\right)$  are pairwise disjoint, and they form an uncountable collection. This contradicts the fact that for any set in this collection, the probability of $(X,Y)$ belonging to that set is positive.  
\end{proof}

\section{The Spherical Symmetry Characterization and its Proof}\label{proof}
We will require a simple version of Haar's Theorem for rotation-invariant measures on $\mathcal{S}^{n-1}$, the surface of the unit sphere in $\mathbb{R}^n$. It is stated below.
\begin{thm}\label{five}
Let $\mu$ be a rotation-invariant Borel probability measure on $\mathcal{S}^{n-1}$ i.e. $\mu(\mathbf{H} B) = \mu(B)$ for every Borel set $B \subseteq \mathcal{S}^{n-1}$ and every $n \times n$  orthogonal matrix $\mathbf{H}$. Then, $\mu$ is the uniform measure on $\mathcal{S}^{n-1}$. 
\end{thm}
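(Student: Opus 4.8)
The plan is to deduce the statement from the existence of a Haar probability measure on the compact group $O(n)$ of $n \times n$ real orthogonal matrices, combined with an averaging argument. Such a measure exists; concretely, one may realize it as the distribution of the orthogonal factor $Q$ in the $QR$-decomposition of a random matrix with i.i.d. standard Gaussian entries, and then the rotation-invariance of the multivariate normal distribution, together with the uniqueness of the $QR$-decomposition (normalizing the diagonal of the triangular factor to be positive), shows that $\mathbf{H}Q$, $Q$, and $Q\mathbf{H}$ all have the same distribution for every fixed $\mathbf{H} \in O(n)$. I would take only this left- and right-invariance of the law of $Q$ as external input.

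Next I would record that the normalized surface-area measure on $\mathcal{S}^{n-1}$ — which is what is meant by the uniform measure — is itself rotation-invariant, this being the change-of-variables formula for surface integrals under an orthogonal transformation; so it satisfies the hypothesis of the theorem. Hence it suffices to show that any two rotation-invariant Borel probability measures $\mu_1,\mu_2$ on $\mathcal{S}^{n-1}$ coincide, and the theorem then follows by taking $\mu_2$ to be the normalized surface measure.

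For the uniqueness, let $X$ have law $\mu_1$ and let $Q$ be a Haar-distributed random orthogonal matrix independent of $X$. Conditioning on the value of $Q$ and using rotation-invariance of $\mu_1$ gives that $QX$ has law $\mu_1$. On the other hand, conditioning on the value of $X$: for any two unit vectors $\mathbf{v},\mathbf{w}$ we may write $\mathbf{v} = \mathbf{H}_0\mathbf{w}$ for some $\mathbf{H}_0 \in O(n)$ (the action of $O(n)$ on $\mathcal{S}^{n-1}$ is transitive), whence $Q\mathbf{v} = (Q\mathbf{H}_0)\mathbf{w}$ has the same law as $Q\mathbf{w}$ by right-invariance of the law of $Q$; thus $Q\mathbf{x}$ has a law $\rho$ not depending on the unit vector $\mathbf{x}$, and since $Q$ is independent of $X$ this $\rho$ is the conditional law of $QX$ given $X$. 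Integrating out $X$ shows that $QX$ has law $\rho$, so $\mu_1 = \rho$. As $\rho$ was defined with no reference to $\mu_1$, the identical computation gives $\mu_2 = \rho$, hence $\mu_1 = \mu_2$.

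The only genuinely delicate step is the first: producing the invariant measure on $O(n)$, which is itself an instance of Haar's theorem, so some such input cannot be avoided — I view the Gaussian-matrix construction above as the cleanest self-contained route, though one could equally just cite the existence of Haar measure on a compact group. The remaining measure-theoretic points (joint measurability of $(\mathbf{H},\mathbf{x})\mapsto \mathbf{H}\mathbf{x}$, which holds by continuity, and the appeal to Fubini's theorem when switching between the two conditionings) are routine. If one prefers to avoid $O(n)$ entirely, an alternative is induction on $n$: for $n=2$ the claim reduces to the fact that a translation-invariant probability measure on the circle is Lebesgue measure, immediate from its Fourier coefficients, and for $n \geq 3$ one conditions on a two-dimensional coordinate slice; but bookkeeping for the latitude marginal makes this route longer than the averaging argument.
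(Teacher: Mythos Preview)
The paper does not actually prove this theorem: it is merely stated as a known ``simple version of Haar's Theorem'' and then invoked without argument. So there is no proof in the paper to compare against; you have supplied what the paper deliberately omits.

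Your argument is correct. The averaging step --- take $X\sim\mu_1$ independent of a Haar-distributed $Q\in O(n)$, compute the law of $QX$ two ways (conditioning on $Q$ gives $\mu_1$ by rotation-invariance; conditioning on $X$ gives a fixed law $\rho$ by transitivity of the $O(n)$-action and right-invariance of Haar measure), and conclude $\mu_1=\rho$ for every rotation-invariant $\mu_1$ --- is the standard and cleanest proof of uniqueness of an invariant probability under a transitive compact-group action. Your remark that some form of Haar input is unavoidable is also apt: the statement is essentially the uniqueness half of Haar's theorem specialized to the homogeneous space $\mathcal{S}^{n-1}\cong O(n)/O(n-1)$, and the Gaussian $QR$ construction you sketch is a perfectly good self-contained way to produce the required bi-invariant measure on $O(n)$. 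The alternative inductive route you mention (Fourier on the circle for $n=2$, then slicing) would also work but, as you say, is messier.
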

It follows from Theorem \ref{five}, that if $\bx$ is a unit norm, spherically symmetric random vector in $\mathbb{R}^n$, then $\bx$ has the uniform distribution on $\mathcal{S}^{n-1}$. We are now ready to state and prove the main result of this paper. 
\begin{thm}\label{six}
Let $X$ and $Y$ be two independent random variables. Suppose that $(X,Y)^{\textrm{T}}$ has a spherically symmetric distribution.  Then, $X$ and $Y$ are identically distributed as a normal distribution with mean $0$ (and possibly $0$ variance). 
\end{thm}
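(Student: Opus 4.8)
The plan is to promote the two-dimensional hypothesis to every dimension via Theorem~\ref{three}, and then to extract the normal law as the almost sure limit of the rescaled first coordinate of a uniformly random unit vector, whose limiting law we pin down separately. First, $X$ and $Y$ are identically distributed by Theorem~\ref{one}, so it suffices to identify their common law $F$; and if $\pr(X=0)=1$ we are done, since then $X$ and $Y$ are degenerate at $0$, which is the normal distribution with mean and variance $0$, so by Theorem~\ref{four} we may assume $\pr(X=0)=0$. Let $X_1,X_2,\dots$ be i.i.d.\ from $F$, put $\bx^{(n)}=(X_1,\dots,X_n)^{\textrm{T}}$ and $R_n=\|\bx^{(n)}\|$. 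By Theorem~\ref{three} the vector $\bx^{(n)}$ is spherically symmetric, and $\pr(\bx^{(n)}=\mathbf{0})=\pr(X_1=0)^n=0$, so the direction $\bx^{(n)}/R_n$ is a well-defined, unit-norm, spherically symmetric random vector, hence uniform on $\mathcal{S}^{n-1}$ by Theorem~\ref{five} and the remark following it. In particular $U_n:=X_1/R_n$ has, for every $n$, exactly the law of the first coordinate of a uniformly random point of $\mathcal{S}^{n-1}$.

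The second step is the auxiliary fact that $\rn\,U_n\xrightarrow{d}N(0,1)$. I would prove it within the same circle of ideas: if $Z_1,\dots,Z_n$ are i.i.d.\ $N(0,1)$, then $(Z_1,\dots,Z_n)^{\textrm{T}}$ is spherically symmetric (its characteristic function depends on $\bt$ only through $\|\bt\|$), so by Theorem~\ref{five} its direction is uniform on $\mathcal{S}^{n-1}$, whence the first coordinate of a uniformly random point of $\mathcal{S}^{n-1}$ has the law of $Z_1/\sqrt{Z_1^2+\dots+Z_n^2}$; multiplying by $\rn$ and applying the strong law of large numbers to $(Z_1^2+\dots+Z_n^2)/n\to 1$ gives $\rn\,Z_1/\sqrt{Z_1^2+\dots+Z_n^2}\to Z_1\sim N(0,1)$ almost surely, so that $\rn$ times the first coordinate of a uniform point of $\mathcal{S}^{n-1}$, and hence $\rn\,U_n$, converges in distribution to $N(0,1)$.

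I would then write $\rn\,U_n=X_1/\sqrt{R_n^2/n}$ with $R_n^2=X_1^2+\dots+X_n^2$, and split on $\ex X_1^2$. If $\ex X_1^2=\infty$, the strong law of large numbers in its infinite-mean form gives $R_n^2/n\to\infty$ almost surely, so $\rn\,U_n\to 0$ almost surely and therefore $\rn\,U_n\xrightarrow{d}\delta_0$, which contradicts $\rn\,U_n\xrightarrow{d}N(0,1)$; hence $\sigma^2:=\ex X_1^2<\infty$, and $\sigma^2>0$ because $\pr(X=0)=0$. Then the strong law gives $R_n^2/n\to\sigma^2$ almost surely, so $\rn\,U_n\to X_1/\sigma$ almost surely and a fortiori in distribution; by uniqueness of the distributional limit, $X_1/\sigma\sim N(0,1)$, i.e.\ $X_1\sim N(0,\sigma^2)$, and then Theorem~\ref{one} finishes the proof.

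The hard part, and exactly the point where the strong law (rather than merely the central limit theorem) is needed, will be the exclusion of the infinite-variance case, that is, carrying the whole argument through with no a priori moment hypothesis: it is the almost sure pathwise behaviour of $\rn\,U_n$ together with its known distributional limit $N(0,1)$ that forces $R_n^2/n$ to have a finite positive limit and hence $X_1^2$ to be integrable, after which the conclusion is immediate. I should also take some care that, for each finite $n$, $U_n$ really has the exact law of the first coordinate of a uniform point of $\mathcal{S}^{n-1}$ --- this rests on Haar's theorem, Theorem~\ref{five}, not on an asymptotic statement --- and that $X_1$ is almost surely finite, so that the pathwise limits written above make sense.
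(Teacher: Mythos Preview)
Your proposal is correct and follows essentially the same route as the paper: promote to all dimensions via Theorem~\ref{three}, identify $\bx_n/\|\bx_n\|$ and $\bz_n/\|\bz_n\|$ with the uniform law on $\mathcal{S}^{n-1}$ via Theorem~\ref{five}, use the strong law on the Gaussian side to pin down the limit as $N(0,1)$, and then use the strong law on the $X$-side---first in its infinite-mean form to rule out $\ex X_1^2=\infty$, then in its finite-mean form to conclude $X_1/\sqrt{\ex X_1^2}\sim N(0,1)$. The only cosmetic difference is that you isolate the auxiliary fact $\sqrt{n}\,U_n\xrightarrow{d}N(0,1)$ as a separate lemma before analysing the $X$-sequence, whereas the paper keeps the two sequences in parallel via the equality $\sqrt{n}\,X_1/\|\bx_n\|\stackrel{d}{=}\sqrt{n}\,Z_1/\|\bz_n\|$ for each $n$; the content is identical.
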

\begin{proof}
By Theorem \ref{one}, $X$ and $Y$ have the same distribution, say $F$. By Theorem \ref{four}, $\pr(X=0)$ is either $0$ or $1$. In the latter case, $X$ has the normal distribution with mean $0$ and variance $0$. So, assume that $\pr(X=0) = 0$.

Generate a sequence $\{X_n\}_{n=1}^\infty$ of independent random variables from the distribution $F$, and a sequence $\{Z_n\}_{n=1}^\infty$ of independent $N(0,1)$ random variables. For each $n$, call $\bx_n = (X_1,...,X_n)^{\textrm{T}}$ and $\bz_n = (Z_1,...,Z_n)^{\textrm{T}}$. It follows from Theorem \ref{three} that $\bx_n$ has a spherically symmetric distribution i.e. for every $n \times n$ orthogonal matrix $\mathbf{H}$, $\bx_n$ and $\mathbf{H} \bx_n$ have the same distribution. So, $$\frac{\bx_n}{||\bx_n||} \stackrel{\textrm{d}}{=} \frac{\mathbf{H} \bx_n}{||\mathbf{H} \bx_n||} = \mathbf{H} \frac{\bx_n}{||\bx_n||}$$ for every $n$ and every $n \times n$ orthogonal matrix $\mathbf{H}$. Thus, $\frac{\bx_n}{||\bx_n||}$ is a unit norm spherically symmetric random vector in $\mathbb{R}^n$, and hence, follows the uniform distribution on $\mathcal{S}^{n-1}$. By the same argument,  $\frac{\bz_n}{||\bz_n||}$ also follows the uniform distribution on $\mathcal{S}^{n-1}$. Hence, $\frac{\bx_n}{||\bx_n||} \stackrel{\textrm{d}}{=} \frac{\bz_n}{||\bz_n||}$ for all $n$. This, in turn, implies that $\frac{\sqrt{n} X_1}{||\bx_n||} \stackrel{\textrm{d}}{=} \frac{\sqrt{n} Z_1}{||\bz_n||}$ for all $n$. By the Strong Law of Large Numbers, the right hand side converges almost surely to $Z_1$. Observe that $\ex X_1^2 < \infty$, since otherwise, by the Strong Law of Large Numbers for independent and identically distributed random variables with expectation $+\infty$, it would follow that the left hand side converges almost surely to $0$, a contradiction. So, by the Strong Law of Large Numbers for finite mean, the right hand side converges almost surely to $\frac{X_1}{\sqrt{\ex X_1^2}}$. Hence, $\frac{X_1}{\sqrt{\ex X_1^2}} \stackrel{\textbf{d}}{=} Z_1$ and we are done.
\end{proof}

\begin{remark}
	A slight modification of the proof of Theorem \ref{six} yields the following result:
	\begin{thm}\label{seven}
	Suppose that $\{X_n\}_{n=1}^\infty$ is a sequence of random variables satisfying the following conditions:\\
	1. $\pr(X_1 = 0) = 0$,\\
	2. $\ex X_1^4 < \infty$,\\
	3. $(X_1,...,X_n)^{\textrm{T}}$ is spherically symmetric for all $n \geq 1$,~ and\\
	4. $\textrm{Cov}(X_i^2,X_j^2) = 0$ for all $1 \leq i < j$.\\
	Then, $X_1,X_2,...$ are identically distributed as a normal distribution with mean $0$ and positive variance. 
	\end{thm}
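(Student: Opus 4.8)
\section*{Proof proposal for Theorem \ref{seven}}

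The plan is to run the proof of Theorem \ref{six} almost verbatim, replacing the two appeals to the i.i.d.\ strong law by a strong law for \emph{pairwise uncorrelated} summands. First, by Theorem \ref{one} all the $X_i$ share a common distribution, hence so do all the $X_i^2$; call $m := \ex X_1^2$ their common mean and note that condition~2 gives them a common finite variance $v := \ex X_1^4 - m^2$. Since $\pr(X_1=0)=0$ we have $X_1^2 > 0$ almost surely, so $m > 0$; moreover $\{\|\bx_n\| = 0\} \subseteq \{X_1 = 0\}$, so $\bx_n/\|\bx_n\|$ is well defined almost surely. As in Theorem \ref{six}, condition~3 together with Theorem \ref{five} shows that $\bx_n/\|\bx_n\|$ and $\bz_n/\|\bz_n\|$ are both uniform on $\sm^{n-1}$, where $\bz_n = (Z_1,\dots,Z_n)^{\textrm T}$ and $\{Z_i\}$ are i.i.d.\ $N(0,1)$; hence $\frac{\rn X_1}{\|\bx_n\|} \myeq \frac{\rn Z_1}{\|\bz_n\|}$ for every $n$.

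The heart of the argument is to prove $\frac{1}{n}\sum_{i=1}^n X_i^2 \to m$ almost surely. Because the $X_i^2$ are only pairwise uncorrelated (condition~4), not independent, I would use the classical subsequence trick (essentially Rajchman's strong law). Writing $S_n = \sum_{i=1}^n X_i^2$, condition~4 gives $\vc(S_n) = nv$, so Chebyshev's inequality plus the Borel--Cantelli lemma along the subsequence $n_k = k^2$ yields $S_{k^2}/k^2 \to m$ almost surely. For a general $n$ with $k^2 \le n < (k+1)^2$, the non-negativity of the $X_i^2$ gives the sandwich $\frac{S_{k^2}}{(k+1)^2} \le \frac{S_n}{n} \le \frac{S_{(k+1)^2}}{k^2}$, and since $(k+1)^2/k^2 \to 1$ this forces $S_n/n \to m$ almost surely. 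I expect this monotone sandwiching to be the only genuine obstacle; it is also precisely where the non-negativity of $X_i^2$ and the fourth-moment bound of condition~2 are used.

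Finally, I would combine the pieces exactly as in Theorem \ref{six}. The right-hand side $\frac{\rn Z_1}{\|\bz_n\|} = \big(n^{-1}\sum_{i=1}^n Z_i^2\big)^{-1/2} Z_1$ converges almost surely to $Z_1$ by the ordinary strong law, while the left-hand side $\big(n^{-1} S_n\big)^{-1/2} X_1$ converges almost surely to $X_1/\sqrt{m}$ by the previous paragraph. Almost sure convergence implies convergence in distribution, and the distributional equality for each fixed $n$ passes to the limit, so $X_1/\sqrt m \myeq Z_1$; that is, $X_1 \sim N(0,m)$ with $m > 0$, and by Theorem \ref{one} every $X_i$ has this same distribution, which is the assertion. (If one wants the stronger i.i.d.\ conclusion, Theorem \ref{two} gives $\phi(\bt) = \ex e^{i\|\bt\| X_1} = e^{-m\|\bt\|^2/2}$, the characteristic function of i.i.d.\ $N(0,m)$ coordinates.)
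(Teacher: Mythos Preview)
Your proof is correct and follows exactly the paper's own strategy: replicate the proof of Theorem~\ref{six}, the one new ingredient being the convergence $\|\bx_n\|^2/n \to \ex X_1^2$. The paper is content with convergence in probability here (immediate from Chebyshev, since condition~4 gives $\vc\big(\sum_{i=1}^n X_i^2\big)=nv$), whereas you upgrade to almost sure convergence via Rajchman's subsequence argument; either mode suffices, since only the distributional limit is needed at the end.
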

	The only observation needed before replicating the proof of Theorem \ref{six} is that, under the above conditions, $\frac{||\bx_n||}{\sqrt{n}} \xrightarrow{\textrm{P}} \sqrt{\ex X_1^2}$. Theorem \ref{seven} is probably interesting only from the angle that the independence of the $X_n$'s can be relaxed in lieu of some additional assumptions, in order to arrive at the same normal characterization.    
\end{remark}

\section{Shorter Proofs Under Additional Moment Assumptions}\label{twoline}
Theorem \ref{six} has shorter proofs under additional assumptions of finiteness of the first and second moments of $X$. Suppose that we only have $\ex |X| < \infty$. Since $X$ has a symmetric distribution around $0$, this condition is equivalent to the existence of $\ex X$. In this case, the characteristic function $\phi$ of $X$ is differentiable on $\mathbb{R}$.\\\\
By an application of Theorem \ref{two}, we have $\phi(s) \phi(t) = \phi\left(\sqrt{s^2 + t^2}\right)$ for all $s , t \in \mathbb{R}$.  Since the distribution of $X$ is symmetric around $0$, $\phi$ is a real valued, even function. We claim that $\phi(t) > 0$ for all $t \in \mathbb{R}$. If not, then since $\phi(0) = 1$, by the intermediate value theorem, there is a $t_0 \in \mathbb{R}$, such that $\phi(t_0) = 0$. Since $\phi(t) = \left[\phi\left(\frac{t}{\sqrt{2}}\right)\right]^2$ for all $t \in \mathbb{R}$, an easy induction gives $\phi(t) = \left[\phi\left(\frac{t}{2^{\frac{n}{2}}}\right)\right]^{2^n}$ for all $t \in \mathbb{R}$ and all $n \geq 1$. This implies that $\phi\left(\frac{t_0}{2^{\frac{n}{2}}}\right) = 0$ for all $n \geq 1$, which is not possible, since $\phi$ is continuous at $0$ and $\phi(0) = 1$. This proves our claim.\\\\
If we denote $\log \phi$ by $\psi$, then we have $\psi(s) + \psi(t) = \psi \left(\sqrt{s^2 + t^2}\right)$ for all $s , t \in \mathbb{R}$. Taking partial derivative with respect to $s$ on both sides of the above identity, we get: $$\psi'(s) = \psi' \left(\sqrt{s^2 + t^2}\right) \left(\frac{s}{\sqrt{s^2 + t^2}}\right)\hspace{0.3cm}\textrm{for all}\hspace{0.15cm} (s,t) \neq (0,0).$$ This implies that there is a constant $c$ such that $\frac{\psi'(s)}{s} = c$ for all $s \neq 0$. Solving this differential equation and remembering that $\psi$ is continuous at $0$ with $\psi(0) = 0$, we get $\psi(s) = \frac{c s^2}{2}$ for all $s \in \mathbb{R}$. Since $\psi(s) \leq 0$ for all $s$, we must have $c \leq 0$. Now, $\phi(s) = e^{\frac{c s^2}{2}}$ for all $s \in \mathbb{R}$ implies that $X \sim N\left(0,-c\right)$ and we are done.\\

If further, we assume that $\ex X^2 < \infty$, the proof turns out to be surprisingly short, and is given below.
\begin{lemma}\label{lem}
Let $\{X_n\}_{n=1}^\infty$ be a sequence of independent and identically distributed random variables, satisfying that $(X_1,...,X_n)^{\textrm{T}}$ has a spherically symmetric distribution for all $n$. For each $n$, denote the partial sum $\sum_{i=1}^n X_i$ by $S_n$. Then, $\frac{S_n}{\sqrt{n}}~(n=1,2,...)$ are identically distributed as $X_1$.
\end{lemma}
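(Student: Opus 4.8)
The plan is to realize $S_n/\sqrt{n}$ as a single coordinate of a rotation of $\bx_n = (X_1,\dots,X_n)^{\textrm{T}}$ and then quote spherical symmetry. Fix $n$ and let $\mathbf{H}$ be any $n \times n$ orthogonal matrix whose first row is $\frac{1}{\sqrt{n}}(1,1,\dots,1)$; such a matrix exists because the unit vector $\frac{1}{\sqrt{n}}(1,\dots,1)^{\textrm{T}}$ can be completed to an orthonormal basis of $\mathbb{R}^n$ (e.g.\ by Gram--Schmidt, or explicitly via the Helmert matrix). The first coordinate of $\mathbf{H}\bx_n$ is then exactly $\frac{1}{\sqrt{n}}\sum_{i=1}^n X_i = \frac{S_n}{\sqrt{n}}$. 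Since $\bx_n$ is spherically symmetric by hypothesis, $\bx_n \myeq \mathbf{H}\bx_n$, so in particular their first coordinates agree in distribution, giving $\frac{S_n}{\sqrt{n}} \myeq X_1$. As $n$ was arbitrary, this proves the lemma.

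An equivalent route, which sidesteps exhibiting $\mathbf{H}$, is through characteristic functions. By Theorem \ref{two}, for any $t \in \mathbb{R}$ we have $\ex e^{i t S_n/\sqrt{n}} = \ex e^{i\frac{t}{\sqrt{n}}(X_1 + \dots + X_n)} = \ex e^{i\|\bt\| X_1}$ with $\bt = \frac{t}{\sqrt{n}}(1,\dots,1)^{\textrm{T}}$, and since $\|\bt\| = |t|$ this equals $\ex e^{i|t|X_1}$. Taking $\mathbf{H} = -\mathbf{I}$ in the definition of spherical symmetry shows $X_1 \myeq -X_1$, so its characteristic function is even; hence $\ex e^{i t S_n/\sqrt{n}} = \ex e^{i t X_1}$ for all $t$, and therefore $S_n/\sqrt{n} \myeq X_1$.

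There is no genuine obstacle here: the content is entirely that of Theorem \ref{two} specialized to the direction $(1,\dots,1)$, and the only things to verify are the routine existence of the orthogonal completion (first proof) or the evenness of the characteristic function of the symmetric variable $X_1$ (second proof). I would likely present the characteristic-function version for brevity, since it makes the subsequent deduction (applying the central limit theorem to $S_n/\sqrt{n}$, as hinted in the surrounding text) completely transparent.
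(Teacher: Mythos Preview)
Your first argument is exactly the paper's proof: choose an orthogonal matrix with first row $\frac{1}{\sqrt{n}}(1,\dots,1)$ and compare first coordinates of $\bx_n$ and $\mathbf{H}\bx_n$. The characteristic-function alternative you add is also correct and is just the same observation read through Theorem~\ref{two}.
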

\begin{proof}
For each $n$, let $\mathbf{H}_n$ denote the orthogonal matrix whose first row is $\left(\frac{1}{\sqrt{n}},\frac{1}{\sqrt{n}},...,\frac{1}{\sqrt{n}}\right)$ and let $\bx_n = (X_1,...,X_n)^{\textrm{T}}$. Since $\bx_n$ and $\mathbf{H}_n \bx_n$ have the same distribution, their first entries have the same distribution. 
\end{proof}
Now, consider proving Theorem \ref{six} under the assumption $\ex X^2 < \infty$. The case $\ex X^2 = 0$ is trivial, so assume that $\ex X^2 > 0$. As in the proof of Theorem \ref{six}, generate  a sequence $\{X_n\}_{n=1}^\infty$ of independent random variables from the common distribution of $X$ and $Y$, and for each $n$, let $S_n = \sum_{i=1}^n X_i$. By Theorem \ref{three} and Lemma \ref{lem}, $X_1 \stackrel{\textbf{d}}{=} \frac{S_n}{\sqrt{n}}$ for all $n$. By Theorem \ref{one}, $\ex X = 0$. Hence, by the Central Limit Theorem, $\frac{S_n}{\sqrt{n}} \xrightarrow{\textbf{d}} N(0,\ex X^2)$. So, $X_1 \sim  N(0,\ex X^2)$.\\
\begin{remark}
	If $\{X_n\}_{n=1}^\infty$ is an i.i.d. sequence of random variables with $S_n \stackrel{def}{=} \sum_{i=1}^n X_i$, and if $\frac{S_n}{\sqrt{n}}$ converges in distribution to a limit, then $\ex X_1^2 < \infty$ (see exercise $3.4.3$ of \cite{durrett}). The finiteness of $\ex X^2$ is now an immediate consequence of this fact and Lemma \ref{lem}, which in turn gives a second proof of Theorem \ref{six}.     
	\end{remark}
	
\section{Conclusion}\label{conclusion}
\hspace{0.5cm}Theorem \ref{six} appears in \cite{charakt} (Theorem $0.0.1$) and an early proof of it appears in \cite{bartlett}. The treatment in \cite{bartlett} is however not very rigorous on probabilistic grounds. Corollary 10 of \cite{cltsymmetry} is a consequence of Theorem \ref{six}.

Our first proof of Theorem \ref{six} can be divided into two broad ideas. The first idea is to derive the spherical symmetry property of any number of independent observations from a distribution based on the knowledge of the spherical symmetry of two independent observations from that distribution. The second idea is to use the outcome of the first idea along with the Strong Law of Large Numbers, to conclude the result. In the process, the unit norm spherical symmetry characterization of the uniform distribution on the surface of an $n$ dimensional sphere was crucially used. The main advantage of this proof is that it is free of any calculation trickery, and is purely conceptual.

It is possible to give a more ``direct" proof of Theorem \ref{six} by solving the functional equation $\phi(s) \phi(t) = \phi\left(\sqrt{s^2 + t^2}\right)$ for all $s , t \in \mathbb{R}$, for a general characteristic function $\phi$. However, this approach relies strongly on the independence assumption of the random variables, and cannot, for example, be used to prove Theorem \ref{seven}. 
 
\section*{Acknowledgement}
The author thanks Professor J. Michael Steele for one of his assignment problems in the STAT 930 course offered by the University of Pennsylvania, which was the source of the idea behind the proof of Theorem \ref{six}.  

%A proof of \ref{six} (with a slightly different form of the statement) appears in \cite{kac}. The main idea of the proof is to construct a functional equation involving the characteristic function of $X$, and to solve it using standard techniques of the theory of functional equations. The proof is quite simple and straightforward, but may not be the most elegant.\\

% In the process, we have also demonstrated how the proof becomes noticeably simple under some additional moment conditions.  For example, Corollary 10 of is a consequence of our proof of Theorem \ref{six} under the assumption $\ex X^2 < \infty$, which is unbelievably short.  \\

%Theorem \ref{six} for $n$ observations instead of $2$, is known as Maxwell's Theorem. By the remark following Theorem \ref{two}, Maxwell's Theorem can be immediately derived from Theorem \ref{six}.

\end{document}